\documentclass[reqno]{amsart}
\usepackage{amsmath, amsfonts, amssymb, amsthm, amscd, cancel, enumerate, rotating, comment, appendix,graphics,pgf,tikz,hyperref}
\usepackage[capitalise, noabbrev]{cleveref}
\usetikzlibrary{arrows,positioning, shapes.geometric,shadings}
\usepackage{times}

\usepackage{mathrsfs}

\usepackage{amsfonts,amssymb,amscd,amsmath,enumerate,verbatim,calc,url}
\usepackage{amsthm, psfrag,latexsym,epsfig,mdwlist,graphicx,comment, cancel}
\usepackage{times}  
\setlength{\parskip}{.2em}

\textheight8in
\topmargin-.5in
\theoremstyle{plain}


\newtheorem{theorem}{Theorem}[section]

\newtheorem{proposition}[theorem]{Proposition}

\theoremstyle{definition}
\newtheorem{definition}[theorem]{Definition}

\newtheorem{remark}[theorem]{Remark}
\newtheorem{example}[theorem]{Example}



\newcommand{\st}{\ : \ }

\newcommand{\Taylor}{\mathrm{Taylor}}

\newcommand{\m}{\mathbf{m}}

\newcommand{\LCM}{\mathrm{LCM}}
\newcommand{\D}{\Delta}
\newcommand{\LL}{\mathbb{L}}
\newcommand{\tuple}[1]{\langle #1 \rangle}
\newcommand{\n}{\mathfrak{n}}





%

%

\newcommand{\sfk}{\mathsf k}

\begin{document}
\bibliographystyle{amsplain}

\author[S.~Cooper]{Susan M. Cooper}
\address{Department of Mathematics\\
University of Manitoba\\
520 Machray Hall\\
186 Dysart Road\\
Winnipeg, MB\\
Canada R3T 2N2}
\email{susan.cooper@umanitoba.ca}

\author[S.~El Khoury]{Sabine El Khoury}
\address{Department of Mathematics,
American University of Beirut,
Bliss Hall 315, P.O. Box 11-0236,  Beirut 1107-2020,
Lebanon}
\email{se24@aub.edu.lb}

\author[S.~Faridi]{Sara Faridi}
\address{Department of Mathematics \& Statistics\\
Dalhousie University\\
6316 Coburg Rd.\\
PO BOX 15000\\
Halifax, NS\\
Canada B3H 4R2}
\email{faridi@dal.ca}

\author[S~Mayes-Tang]{Sarah Mayes-Tang}
\address{Department of Mathematics\\
University of Toronto\\
40 St. George Street, Room 6290\\
Toronto, ON \\
Canada M5S 2E4}
\email{smt@math.toronto.edu}

\author[S.~Morey]{Susan Morey}
\address{Department of Mathematics\\
Texas State University\\
601 University Dr.\\
San Marcos, TX 78666\\U.S.A.}
\email{morey@txstate.edu}

\author[L.~M.~\c{S}ega]{Liana M.~\c{S}ega}
\address{Liana M.~\c{S}ega\\ Department of Mathematics and Statistics\\
   University of Missouri\\ \linebreak Kansas City\\ MO 64110\\ U.S.A.}
     \email{segal@umkc.edu}

\author[S.~Spiroff]{Sandra Spiroff }
\address{Department of Mathematics,
University of Mississippi,
Hume Hall 335, P.O. Box 1848, University, MS 38677
USA}
\email{spiroff@olemiss.edu}

\keywords{powers of ideals; simplicial complex; Betti numbers; free
  resolutions; monomial ideals}

\subjclass[2010]{13D02; 13F55}

 \title{Simplicial resolutions for the second power of square-free
   monomial ideals}

 \begin{abstract}  Given a square-free monomial ideal $I$, we define a
simplicial complex labeled by the generators of $I^2$ which supports a
free resolution of $I^2$.  As a consequence, we obtain (sharp) upper bounds
on the Betti numbers of the second power of any square-free monomial
ideal.
 \end{abstract} 
 
 \maketitle

\section{Introduction}

  The question of finding, or even effectively bounding, the Betti
  numbers of an ideal in a commutative ring is a difficult one. Even
  more complicated is using the structure of an ideal $I$ to find
  information about the Betti numbers of its powers $I^r$: predicting
  something as basic as the minimal number of generators of  $I^r$ is
  a difficult problem.

Taylor's thesis~\cite{T} described a free resolution of any ideal minimally
generated by $q$ monomials using the simplicial chain complex of a
simplex with $q$ vertices. Taylor's  construction, though often far from
minimal, produces a resolution of \emph{every} monomial ideal $I$. It gives
upper bounds ${q \choose {i+1} } \geq \beta_{i}(I)$ for the Betti
numbers of $I$ where $q \choose i+1$ is the number of $i$-faces of a
$q$-simplex.  If $I$ is generated by $q$ monomials and $r$ is a
positive integer, then the number of generators of $I^r$ generally grows
exponentially and as a result, so do the bounds on the Betti numbers
of $I^r$ given by Taylor's resolution.

In this paper, we focus on the case where $r=2$ and $I$ is a
square-free monomial ideal with $q$ generators. In this case, we know
that $I^2$ can be generated by at most ${q+1}\choose{2}$ monomials,
and hence has a Taylor resolution supported on a simplex with
at most ${q+1}\choose{2}$ vertices.  The question that
we address in this paper is: can we find a subcomplex of this simplex
whose simplicial chain complex yields a free resolution of $I^2$?  Such a
resolution would be closer to minimal than the Taylor resolution.

We answer this question by constructing a simplicial complex on
${q+1}\choose{2}$ vertices which we call $\LL^2_q$ in honor of the
Lyubeznik resolution~\cite{L} which was our inspiration.  While
$\LL^2_q$ has the same number of vertices as the
${q+1}\choose{2}$-simplex, it is significantly smaller because
it has far fewer faces.  For a given square-free monomial ideal
$I$, we can use further deletions of $\LL^2_q$ specific to the
generators of $I$ to show that $\LL^2_q$ has an induced subcomplex $\LL^2(I)$
which supports a free resolution of $I^2$.  As a result, we find (sharp) upper
bounds on the Betti numbers of the second power of any square-free
monomial ideal.  These bounds are often significantly smaller than
the bounds provided by the Taylor resolution (see \cref{s:bounds}).

  \cref{sec:background} lays out the notation and terminology used in
  the paper including the construction of simplicial
    resolutions. In \cref{sec:main}, we describe the complexes
  $\LL^2_q$ (\cref{d:L2}) and $\LL^2(I)$ (\cref{d:L2I}) and prove that
  $\LL^2(I)$ supports a free resolution of $I^2$ when $I$ is a
  square-free monomial ideal
  (\cref{t:L2-supports-new}). \cref{s:bounds} provides results on the
  bounds on the Betti numbers that follow from the main results.

  This paper is part of a larger project~\cite{CEFMMSS} to study
resolutions of powers of monomial ideals, which the authors started
during the 2019 Banff workshop ``Women in Commutative Algebra''.

\section{Background}
\label{sec:background}

Throughout this paper we let $S=\sfk[x_1,\ldots,x_n]$ be a polynomial
ring over a field $\sfk$.  In this section we briefly recall some
necessary background about simplicial complexes. 

A {\bf simplicial complex}
  $\D$ over a {\bf vertex set } $V$ is a set of subsets of $V$
  such that if $F \in \D$ and $G \subseteq F$ then $G \in \D$. An element $\sigma$ of
  $\D$ is called a {\bf face} and the maximal faces under
  inclusion are called {\bf facets}. A simplicial complex can be
  uniquely determined by its facets, and we use the
  notation $$\D=\tuple{F_0,\ldots,F_q}$$ to describe a simplicial
  complex whose facets are $F_0,\ldots,F_q$.

The {\bf dimension} of a face $F$ in $\D$ is $\dim(F)=|F|-1$, and the
dimension of $\D$ is the maximum of the dimensions of its faces.

A simplicial complex with one facet is called a {\bf simplex}.

  If $W \subseteq V$, the subcomplex
  $$\D_W=\{ \sigma \in \D \mid \sigma \subseteq W\}$$
  is called the {\bf induced subcomplex} of $\D$ on
  $W$.

  If $\Delta$ is
  a simplicial complex with vertex $v$, then when we {\bf delete} $v$
  from $\Delta$ we obtain the simplicial complex
  $$\Delta \setminus \{v \}= \{ \sigma \in \Delta \mid v \notin \sigma \}.$$

  A facet $F$ of $\D$ is said to be a {\bf leaf} if it is the only 
  facet of $\Delta$, or there is a different facet $G$ of $\Delta$, called a {\bf joint}, such that
  $$F\cap H \subseteq G$$ for all facets $H \neq F$. The joint  $G$ in
  this definition is not
  unique~(\cite{F02}).  
  A simplicial complex
  $\D$ is a {\bf quasi-forest} if the facets of $\D$ can be ordered as
  $F_0,\ldots, F_q$ such that for $i=0,\ldots,q$, the facet $F_i$ is a
  leaf of the simplicial complex $\tuple{F_0,\ldots,F_i}$. A connected
  quasi-forest is called a {\bf quasi-tree}~(\cite{Z}).

  \begin{example}\label{e:running-0} The simplicial complex below
    is a quasi-tree, with leaf order: $F_0,F_1,F_2,F_3$, meaning that
    each $F_i$ is a leaf of $\langle F_0,\ldots,F_i\rangle$. Note that in this case, the joint of $F_i$ is
    $F_0$ for every $i\geq 1$.
    \[ 
\begin{tikzpicture}
\tikzstyle{point}=[inner sep=0pt] \coordinate (a) at (0,1);
\coordinate (b) at (-1,0); \coordinate (c) at (1,0); \coordinate (d)
at (-1.5,1.5); \coordinate (e) at (1.5,1.5); \coordinate (f) at
(0,-1); \draw [fill=gray!20](a.center) -- (b.center) -- (c.center);
\draw [fill=gray!20](a.center) -- (b.center) -- (d.center); \draw
      [fill=gray!20](a.center) -- (c.center) -- (e.center); \draw
      [fill=gray!20](b.center) -- (c.center) -- (f.center); \draw
      (a.center) -- (b.center); \draw (a.center) -- (c.center); \draw
      (a.center) -- (d.center); \draw (a.center) -- (e.center); \draw
      (b.center) -- (c.center); \draw (b.center) -- (d.center); \draw
      (b.center) -- (f.center); \draw (c.center) -- (e.center); \draw
      (c.center) -- (f.center);
      \node [label=$F_0$] at (0,0) {};
      \node [label=$F_1$] at (-.8,.4) {};
      \node [label=$F_2$] at (0.8,.4) {};
      \node [label=$F_3$] at (0,-.8) {};
\end{tikzpicture} \]

This complex is in fact $\LL^2_3$ as we will see later in
\cref{e:L2-picture}.

\end{example}

If $I$ is minimally generated by monomials
$m_1,\ldots,m_q$ in $S$, a {\bf minimal free resolution} of $I$ is a (unique up to isomorphism) exact sequence of free $S$-modules
$$0 \to S^{\beta_p} \to S^{\beta_{p-1}} \to \cdots \to S^{\beta_1} \to
S^{\beta_0} \to I \to 0$$ where $p \in \mathbb{N}$, $\beta_0=q$ and
for each $ i \in \{1,\ldots,p\}$, $\beta_i$ is the smallest possible
rank of a free module in the $i$-th spot of any free resolution of $I$. The
$\beta_i$, called the {\bf Betti numbers} of $I$, are invariants of
the ideal $I$.

Finding ways to describe a free resolution of a given ideal is an open
and active area of research. For monomial ideals, combinatorics plays
a big role.  In her thesis in the 1960's, Diana Taylor introduced a
method of labeling the faces of a simplex $\Delta$ with monomials, and
then used this labeling to turn the simplicial chain complex of
$\Delta$ into a free resolution of a monomial ideal. This technique
has been generalized to other simplicial complexes by Bayer and
Sturmfels~\cite{BS}, among others.

More precisely, if $I$ is minimally generated by monomials
$m_1,\ldots,m_q$ and $\Delta$ is a simplicial complex on $q$ vertices
$v_1,\ldots,v_q$, we label each vertex $v_i$ with the monomial $m_i$,
and we label each face of $\Delta$ with the least common multiple of
the labels of its vertices. Then, if the labeling of $\Delta$
satisfies certain properties, the simplicial chain complex of $\Delta$
can be ``homogenized'' using the monomial labels on the faces to give
a free resolution of $I$. In this case, we say that $\Delta$ {\bf
  supports a free resolution} of $I$ and the resulting free resolution
is called a {\bf simplicial resolution} of $I$.  Peeva's book~\cite{P}
details this method for simplicial as well as other topological
resolutions.

\begin{example}\label{e:running-1}
 Let $I=(x^2,y^2,z^2,xy,xz,yz)$.  In the picture below, we label the
 simplicial complex $\Delta$ in \cref{e:running-0} using the generators of
 $I$. To make the picture less busy, we have included the labels of
 the vertices and the facets only.
  
\[ 
\begin{tikzpicture}
\tikzstyle{point}=[inner sep=0pt]
\node (a)[point,label=above:$xy$] at (0,1) {};
\node (b)[point,label=left:$xz$] at (-1,0) {};
\node (c)[point,label=right:$yz$] at (1,0) {};
\node (d)[point,label=left:$x^2$] at (-1.5,1.5) {};
\node (e)[point,label=right:$y^2$] at (1.5,1.5) {};
\node (f)[point,label=right:$z^2$] at (0,-1) {};
\draw [fill=gray!20](a.center) -- (b.center) -- (c.center);
\draw [fill=gray!20](a.center) -- (b.center) -- (d.center);
\draw [fill=gray!20](a.center) -- (c.center) -- (e.center);
\draw [fill=gray!20](b.center) -- (c.center) -- (f.center);
\draw (a.center) -- (b.center);
\draw (a.center) -- (c.center);
\draw (a.center) -- (d.center);
\draw (a.center) -- (e.center);
\draw (b.center) -- (c.center);
\draw (b.center) -- (d.center);
\draw (b.center) -- (f.center);
\draw (c.center) -- (e.center);
\draw (c.center) -- (f.center);
\node [label=$xyz$] at (0,0) {};
      \node [label=$x^2yz$] at (-.8,.4) {};
      \node [label=$xy^2z$] at (0.8,.4) {};
      \node [label=$xyz^2$] at (0,-.8) {};
\end{tikzpicture} \]

Our main result \cref{t:L2-supports-new} will prove that, indeed, $\Delta$ does
support a free resolution of $I$. This in particular implies that $\beta_i(I)$ is bounded above by the number of $i$-faces of $\Delta$, which is the rank of the $i$-th chain group of $\Delta$. That is,
$$\beta_0(I) \leq 6, \quad \beta_1(I) \leq 9, \quad \beta_2(I) \leq 4,
\quad \beta_i(I) =0 \mbox{ if } i>2.$$ We calculate, using Macaulay2~\cite{M2}, that the actual Betti numbers of $I$ are:
$$\beta_0(I) = 6, \quad \beta_1(I) =8, \quad \beta_2(I) =3,
\quad \beta_i(I) =0 \mbox{ if } i>2.$$ 
\end{example}

A major question in the theory of combinatorial resolutions is to
determine whether a given simplicial complex supports a free resolution
of a given monomial ideal. Taylor proved
that a simplex with $q$ vertices \emph{always} supports a free
resolution of an ideal with $q$ generators, or in other words, \emph{every} monomial ideal has a 
Taylor resolution. As a result ${q}\choose{i+1}$ (the
number of $i$-faces of a simplex with $q$ vertices) is an upper bound
for $\beta_i(I)$ if $I$ is \emph{any} monomial ideal with $q$
generators.  We denote the $q$-simplex labeled with the
$q$ generators of $I$ by  $\Taylor(I)$.

Taylor's resolution is usually
far from minimal. However, if $I$ is a monomial ideal with a free
resolution supported on a (labeled) simplicial complex $\Delta$, then
$\Delta$ has to be a subcomplex of $\Taylor(I)$. As a result, the
question of finding smaller simplicial resolutions of $I$ turns into a
question of finding smaller subcomplexes of $\Taylor(I)$ which support
a resolution of $I$.

One of the best known tools to identify such subcomplexes of the
Taylor complex is due to Bayer, Peeva, and Sturmfels~\cite{BPS}, and reduces the problem to checking acyclicity of induced subcomplexes.  This
criterion was adapted in~\cite{F14} to the class of simplicial trees,
and then in ~\cite{CEFMMSS} to quasi-trees. \cref{t:res-by-quasitree}
is this latter adaptation, and will be used in the rest of the paper.

If $\D$ is a subcomplex of $\Taylor(I)$ and $\m$ is a monomial in
$S$, let $\D_\m$ be the subcomplex of $\D$ induced on the vertices of
$\D$ whose labels divide $\m$, and let $\LCM(I)$ denote the set of monomials that are least common multiples of arbitrary subsets of the minimial monomial generating set of $I$.

\begin{theorem}[\cite{CEFMMSS} {\bf Criterion for quasi-trees supporting resolutions}]\label{t:res-by-quasitree} Let $\D$ be a quasi-tree whose vertices are labeled
  with the monomial generating set of a monomial ideal $I$ in the
  polynomial ring $S$ over a field $\sfk$. Then $\D$ supports a
  resolution of $I$ if and only if for every monomial $\m$ in $\LCM(I)$, $\D_\m$ is empty or connected.
\end{theorem}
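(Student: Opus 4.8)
The plan is to reduce the statement to the Bayer--Peeva--Sturmfels acyclicity criterion and then supply the single topological input that quasi-trees provide. Recall that \cite{BPS} (see also \cite{P}) says: if $\D$ is a simplicial complex whose vertices are labeled bijectively by the minimal monomial generators of $I$, then $\D$ supports a free resolution of $I$ if and only if, for \emph{every} monomial $\m$, the induced subcomplex $\D_\m$ is either empty or $\sfk$-acyclic, meaning $\wh_i(\D_\m;\sfk)=0$ for all $i$. Moreover $\D_\m=\D_{\m'}$, where $\m'$ is the least common multiple of the generators of $I$ that divide $\m$ (an element of $\LCM(I)$), so it suffices to run the test over $\m\in\LCM(I)$. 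Hence the theorem will follow from the purely combinatorial statement: \emph{if $\D$ is a quasi-tree and $W$ is a subset of its vertex set, then $\D_W$ is empty, or else $\D_W$ is $\sfk$-acyclic if and only if it is connected.}

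One implication is free and uses no hypothesis: a disconnected complex has $\wh_0(-;\sfk)\ne 0$, so it is not $\sfk$-acyclic. For the converse, suppose $\D_W$ is nonempty and connected. Being an induced subcomplex of the quasi-tree $\D$, the complex $\D_W$ is again a quasi-forest; this can be seen, for instance, via the characterization of quasi-forests as the clique complexes of chordal graphs, since chordality is inherited by induced subgraphs (cf.\ \cite{Z}, \cite{F02}). A connected quasi-forest is by definition a quasi-tree, so all that remains is to prove that every quasi-tree is $\sfk$-acyclic --- indeed, contractible.

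This last point I would prove by induction on the number of facets of the quasi-tree $\D=\tuple{F_0,\dots,F_q}$ equipped with a leaf order $F_0,\dots,F_q$, the base case of a single facet being clear. For the inductive step, pick a joint $G\in\{F_0,\dots,F_{q-1}\}$ of the leaf $F_q$, so $F_q\cap H\subseteq G$ for every facet $H\ne F_q$. Two structural facts make the argument go through. First, $\tuple{F_q}\cap\tuple{F_0,\dots,F_{q-1}}$ equals the full simplex on $F_q\cap G$, and $F_q\cap G\ne\emptyset$: were it empty, $F_q$ would be disjoint from every other facet, contradicting connectedness of $\D$. Second, $\tuple{F_0,\dots,F_{q-1}}$ is still connected, since a connected component avoiding $G$ would meet $F_q$ only inside $F_q\cap G=\emptyset$ and would therefore survive as a separate component of $\D$. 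Consequently $\tuple{F_0,\dots,F_{q-1}}$ is a connected quasi-forest with fewer facets, contractible by the inductive hypothesis, and $\D$ is obtained from it by attaching the simplex $\tuple{F_q}$ along the nonempty --- hence contractible --- simplex on $F_q\cap G$. A Mayer--Vietoris sequence (or the gluing lemma for homotopy type) now yields $\wh_i(\D;\sfk)=0$ for all $i$.

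Putting the pieces together: $\D_\m$ is empty-or-$\sfk$-acyclic precisely when it is empty-or-connected, and the \cite{BPS} criterion applied over $\m\in\LCM(I)$ gives the theorem. I expect the main obstacle to be the inductive step just described --- pinning down that the final leaf of a quasi-tree is attached along a \emph{nonempty} simplex and that deleting it does not disconnect the complex; once those two facts about leaves are in hand the homological conclusion is routine, and the remaining ingredients (the reduction to \cite{BPS} and the standard passage between quasi-forests and chordal graphs) are bookkeeping.
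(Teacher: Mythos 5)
The paper states this theorem as an imported result from \cite{CEFMMSS} and gives no proof of it here, so there is no internal proof to compare against; your argument is nonetheless correct and follows the expected route (the Bayer--Peeva--Sturmfels acyclicity criterion restricted to $\LCM(I)$, closure of quasi-forests under induced subcomplexes via the chordal clique-complex characterization, and contractibility of quasi-trees by peeling off the final leaf and applying Mayer--Vietoris along the nonempty simplex $F_q\cap G$). Two small cosmetic points: the equivalence of quasi-forests with clique complexes of chordal graphs is due to Herzog--Hibi--Zheng rather than \cite{Z} or \cite{F02}, and the phrase ``would meet $F_q$ only inside $F_q\cap G=\emptyset$'' should say that such a component's intersection with $F_q$ is contained in $G$, which lies in a different component, and is therefore empty.
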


 If $I$ is minimally generated by $q$ monomials, then
  $I^2$ is minimally generated by at most ${q+1}\choose{2}$
  monomials. Our goal in this paper is to find a
  (smaller) subcomplex of the ${q+1}\choose{2}$-simplex which
  produces a free resolution of $I^2$, and only depends on $q$. The
  quasi-tree $\LL^2_q$, introduced in the next section, is such a
  candidate: it has exactly ${q+1}\choose{2}$ vertices, and for any
  given ideal $I$ with $q$ generators, it has an induced subcomplex
  $\LL^2(I)$ contained in $\Taylor(I^2)$ which supports a free
  resolution of $I^2$.

\section{The quasi-trees $\LL_q^2$ and $\LL^2(I)$}
\label{sec:main}
    
For an integer $q \geq 1$ we now give a description of a simplicial
complex $\LL^2_q$, a subcomplex of the ${q+1}\choose{2}$-simplex .  We will show that if $I$ is a monomial ideal
generated by $q$ square-free monomials, an induced
subcomplex of $\LL^2_q$, which we denote by $\LL^2(I)$, always
supports a free resolution of $I^2$. The complex $\LL^2_q$ is a far
smaller subcomplex of the ${q+1}\choose{2}$-simplex, and its construction is motivated by the monomial orderings used to build the Lyubeznik complex \cite{L}.

\begin{definition}\label{d:L2}  For an integer $q \geq 3$, the simplicial complex $\LL^2_q$ over the vertex set
  $\{\ell_{i,j} \st 1 \leq i \leq j \leq q\}$ is defined by its facets as: 
$$\LL^2_q=\tuple{\{\ell_{i,j} \st 1 \leq j \leq q \}_{1 \leq i \leq q}, \quad
\{\ell_{i,j} \st 1 \leq i< j \leq q \}},$$
where we define $\ell_{j,i}$ for $j > i$ by the equality
$\ell_{j,i}=\ell_{i,j}$. 
For $q=1$ and $q=2$ we use the same construction but note that $\{\ell_{i,j} \st 1 \leq i < j \leq q\}$ is empty for $q=1$ and is a face but not a facet for $q=2$. 
\end{definition}

When $q = 1$, the ideals $I$ and $I^r$ for all $r \geq 2$  are principal and $\LL_q^2$ is a point.
When $q=2$, the complex $\LL_2^2$ has only $2$ facets, see
\cref{e:L2-picture}. Note that $\LL^2_q$ has ${q+1}\choose{2}$
vertices, which is the number of vertices of the
${q+1}\choose{2}$-simplex, and, when $q>2$, it has $q+1$ facets, where
one facet has dimension ${{q}\choose{2}} -1$ and the remaining $q$
facets have dimension $q-1$.

\begin{example}\label{e:L2-picture}
  The complexes $\LL^2_3$ and $\LL^2_2$ are shown on the left and
  right, respectively.

\[ 
\begin{tikzpicture}
\tikzstyle{point}=[inner sep=0pt]
\node (a)[point,label=above:$\ell_{1,2}$] at (0,1) {};
\node (b)[point,label=left:$\ell_{1,3}$] at (-1,0) {};
\node (c)[point,label=right:$\ell_{2,3}$] at (1,0) {};
\node (d)[point,label=left:$\ell_{1,1}$] at (-1.5,1.5) {};
\node (e)[point,label=right:$\ell_{2,2}$] at (1.5,1.5) {};
\node (f)[point,label=right:$\ell_{3,3}$] at (0,-1) {};
\node (g)[point,label=below:$\LL^2_3$] at (0,-1.5) {};
\draw [fill=gray!20](a.center) -- (b.center) -- (c.center);
\draw [fill=gray!20](a.center) -- (b.center) -- (d.center);
\draw [fill=gray!20](a.center) -- (c.center) -- (e.center);
\draw [fill=gray!20](b.center) -- (c.center) -- (f.center);
\node (h)[point,label=above: $F_0$] at (0,0) {};
\node (h)[point,label=above: $F_1$] at (-0.7,0.5) {};
\node (h)[point,label=above: $F_2$] at (0.8,0.5) {};
\node (h)[point,label=above: $F_3$] at (0,-0.7) {};
\draw (a.center) -- (b.center);
\draw (a.center) -- (c.center);
\draw (a.center) -- (d.center);
\draw (a.center) -- (e.center);
\draw (b.center) -- (c.center);
\draw (b.center) -- (d.center);
\draw (b.center) -- (f.center);
\draw (c.center) -- (e.center);
\draw (c.center) -- (f.center);
\node (A)[point,label=above: $\ell_{1,2}$] at (5.5,-.5) {};
\node (B)[point,label=above: $\ell_{2,2}$] at (7,0) {};
\node (C)[point,label=above: $\ell_{1,1}$] at (4,0) {};
\node (G)[point,label=below:$\LL^2_2$] at (5.5,-1.5) {};
\node (E)[point,label=below: $F_1$] at (4.7,-.2) {};
\node (F)[point,label=below: $F_2$] at (6.3,-.2) {};
\draw (A.center) -- (B.center);
\draw (A.center) -- (C.center);
\end{tikzpicture} \]
\end{example}

\begin{proposition}\label{p:L2-quasitree} For $q\geq 1$, $\LL^2_q$ is a
  quasi-tree.
\end{proposition}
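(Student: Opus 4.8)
The plan is to exhibit an explicit leaf order on the facets of $\LL^2_q$, namely the order $F_0,F_1,\dots,F_q$ appearing in \cref{e:L2-picture}, where $F_i=\{\ell_{i,j}\st 1\le j\le q\}$ for $1\le i\le q$ and $F_0=\{\ell_{i,j}\st 1\le i<j\le q\}$. That is, I will show that each $F_i$ is a leaf of $\tuple{F_0,F_1,\dots,F_i}$, with joint $F_0$ whenever $i\ge 1$, and then note that the resulting quasi-forest is connected. The degenerate cases $q=1,2$ (where $F_0$ is either empty or a non-facet) will be dispatched separately at the end; for the main argument I assume $q\ge 3$.

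First I would confirm that $F_0,\dots,F_q$ are indeed the facets of $\LL^2_q$, i.e.\ that no one of them is contained in another. The key point is the diagonal vertices: $\ell_{k,k}\in F_k$, but for $j\ne k$ an identity $\ell_{k,k}=\ell_{j,m}$ would force the multiset equality $\{k,k\}=\{j,m\}$ and hence $j=k$, so $\ell_{k,k}\notin F_j$ for $j\ne k$ and $\ell_{k,k}\notin F_0$. Thus the $F_k$ are pairwise non-nested and none is contained in $F_0$; and since $|F_0|=\binom q2\ge q=|F_k|$ — with equality only when $q=3$, where still $F_0\ne F_k$ because $\ell_{k,k}\notin F_0$ — the face $F_0$ is not contained in any $F_k$ either. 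Hence $\LL^2_q=\tuple{F_0,F_1,\dots,F_q}$ has exactly $q+1$ facets, and the same conclusion holds for each subcomplex $\tuple{F_0,\dots,F_i}$.

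The heart of the proof is the leaf condition. The complex $\tuple{F_0}$ is a simplex, so $F_0$ is a leaf of it. Now fix $i$ with $1\le i\le q$; I claim $F_i$ is a leaf of $\Delta_i:=\tuple{F_0,F_1,\dots,F_i}$ with joint $F_0$. The facets of $\Delta_i$ other than $F_i$ are exactly $F_0,\dots,F_{i-1}$, by the previous paragraph. Clearly $F_i\cap F_0\subseteq F_0$. For $1\le j<i$, a vertex $\ell\in F_i\cap F_j$ can be written $\ell=\ell_{i,a}=\ell_{j,b}$ for some $a,b$, so $\{i,a\}=\{j,b\}$ as multisets; since $i\ne j$ this forces $a=j$ and $b=i$, whence $\ell=\ell_{i,j}$. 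Therefore $F_i\cap F_j\subseteq\{\ell_{i,j}\}$, and because $i\ne j$ we have $\ell_{i,j}=\ell_{\min(i,j),\max(i,j)}\in F_0$. Thus $F_i\cap H\subseteq F_0$ for every facet $H\ne F_i$ of $\Delta_i$, so $F_i$ is a leaf with joint $F_0$. Consequently $F_0,F_1,\dots,F_q$ is a leaf order, so $\LL^2_q$ is a quasi-forest; it is connected because for $q\ge 2$ each $F_i$ with $i\ge 1$ meets $F_0$ in $\{\ell_{i,m}\st 1\le m\le q,\ m\ne i\}\ne\emptyset$. Hence $\LL^2_q$ is a quasi-tree.

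For the remaining cases, when $q=1$ the complex is a single vertex $\ell_{1,1}$, a simplex and hence a quasi-tree, and when $q=2$ it is the path $\tuple{F_1,F_2}$ with $F_1\cap F_2=\{\ell_{1,2}\}$, so $F_1,F_2$ is a leaf order. I do not expect a genuine obstacle here: the only points requiring care are the bookkeeping with the identification $\ell_{i,j}=\ell_{j,i}$ when computing the pairwise intersections $F_i\cap F_j$, and the verification that $F_0$ is an honest facet (not contained in any $F_k$) so that it is eligible to be the joint — both of which are handled by the diagonal-vertex observation above.
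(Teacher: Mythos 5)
Your proposal is correct and follows essentially the same route as the paper's proof: the same leaf order $F_0,F_1,\dots,F_q$, the same computation $F_i\cap F_k=\{\ell_{i,k}\}\subseteq F_0$ for distinct nonzero $i,k$, and the same separate treatment of $q=1,2$. The extra checks you include (that no $F_i$ is contained in another, and connectedness) are sound added rigor but do not change the argument.
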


\begin{proof} If $q = 1$, then $\LL_q^2$ is a simplex of dimension 0,
  and so is a quasi-tree.  If $q=2$,
  there are only two facets, namely $F_1$ and $F_2$ (as depicted above), and
  $F_2$ is a leaf of $\langle F_1, F_2\rangle$ with joint $F_1$, so
  $\LL^2_2$ is a quasi-tree.  For $q \geq 3$, order the facets of
  $\LL^2_q$ by $F_0=\{\ell_{i,j} \st 1 \leq i< j \leq q \}$, and
  $F_i=\{\ell_{i,j} \st 1 \leq j \leq q \}$ for $1 \leq i \leq q$. By
  definition, if $i \neq k$ are nonzero, then
  $F_i \cap F_k = \{\ell_{i,k}\}\subseteq F_0.$ Thus each $F_i$ is a
  leaf of $\tuple{F_0,\ldots,F_i}$ with joint $F_0$, and we are done.
  \end{proof}

Given a square-free monomial ideal $I$, we now define a labeled induced subcomplex of $\LL_q^2$, denoted $\LL^2(I)$, which is obtained by deleting vertices from $\LL_q^2$. 

\begin{definition}[{\bf $\LL^2(I)$}]\label{d:L2I}
 For an  ideal $I$ minimally generated by the square-free monomials
$m_1,\ldots,m_q$, we define $\LL^2(I)$ to be a labeled induced subcomplex of $\LL_q^2$ formed by the following rules: 

\begin{enumerate}

\item Label each vertex of $\ell_{i,j}$ of $\LL^2_q$ with the monomial $m_im_j$.

\item If for any indices $i,j,u,v \in [q]$ where $[q]=\{1, \ldots, q\}$ with $\{i,j\}\ne \{u,v\}$ we have $m_im_j \mid m_um_v$, then 
\begin{itemize} 
	\item If $m_im_j = m_um_v$ and $i=\min \{i,j,u,v\}$, then delete the vertex
  $\ell_{i,j}$.

\item If $m_im_j \not= m_um_v$, then
  delete the vertex $\ell_{u,v}$.
  \end{itemize}
  
  \item Label each of the remaining faces with the least common
    multiple of the labels of its vertices.

\end{enumerate}

The remaining labeled subcomplex of $\LL^2_q$ is called $\LL^2(I)$,
 and is a subcomplex of $\Taylor(I^2)$.

\end{definition}

\begin{remark}\label{r:L}

\item  It follows from \cref{p:basic-indices-new} below that if $m_i^2$ divides $m_um_v$ then $u=v=i$, hence the vertices $\ell_{i,i}$ are not deleted in the construction of $\LL^2(I)$. 

\item In Step~2 above, when there is equality, the choice was made to
  eliminate the vertex $\ell_{i,j}$ with minimum index $i$ so that one
  has a well-defined definition for $\LL^2(I)$; in fact, one could
  show that a different choice of elimination would also serve our
  purposes.
\end{remark}

\begin{example}\label{e:running} Let $I=(abe,bc,cdf,ad)$.
  Setting $m_1=abe$, $m_2=bc$ and $m_3=cdf$, $m_4=ad$, we first label
  all vertices of $\LL^2_4$ with the products $m_im_j$, but then note
  that $m_2m_4 \mid m_1m_3$.
\[ 
\begin{tikzpicture}
\tikzstyle{point}=[inner sep=0pt]
\node (11)[point,label=above:$m_1^2$] at (0,2) {};
\node (12)[point,label=left:$m_1m_2$] at (-1,1) {};
\node (14)[point,label=right:$m_1m_4$] at (1,1) {};
\node (22)[point,label=left:$m_2^2$] at (-2,0) {};
\node (23)[point,label=left:$m_2m_3$] at (-1,-1) {};
\node (24)[point,label=above:$m_2m_4$] at (0,0) {};
\node (33)[point,label=below:$m_3^2$] at (0,-2) {};
\node (34)[point,label=right:$m_3m_4$] at (1,-1){};
\node (44)[point,label=right:$m_4^2$] at (2,0){};
\draw [fill=gray!20](11.center) -- (12.center) -- (14.center);
\draw [fill=gray!20](22.center) -- (12.center) -- (24.center);
\draw [fill=gray!20](22.center) -- (23.center) -- (24.center);
\draw [fill=gray!20](23.center) -- (23.center) -- (34.center);
\draw [fill=gray!20](33.center) -- (34.center) -- (23.center);
\draw [fill=gray!20](44.center) -- (34.center) -- (24.center);
\draw [fill=gray!20](44.center) -- (14.center) -- (24.center);
\draw [dashed] (12.center) -- (23.center);
\draw [dashed] (14.center) -- (34.center);
\draw (12.center) -- (14.center);
\draw (22.center) -- (24.center);
\draw (23.center) -- (34.center);
\draw (44.center) -- (24.center);
\draw (33.center) -- (23.center);
\draw (11.center) -- (14.center);
\node (24)[point,label=above:$m_2m_4$] at (0,0) {};
\end{tikzpicture} \]

So the (labeled) facets of $\LL^2(I)$ are the following five:
$$\begin{array}{c|c}
    \mbox{Facet} & \mbox{Dimension}\\
    \hline
          \{ m_1^2, m_1m_2,m_1m_4\}&2\\
\{ m_2^2, m_1m_2,m_2m_3,m_2m_4\}&3\\
\{ m_3^2, m_2m_3,m_3m_4\}&2\\
\{ m_4^2, m_1m_4,m_2m_4,m_3m_4\}&3\\
    \{  m_1m_2,m_1m_4,m_2m_3,m_2m_4,m_3m_4\}& 4\\ \hline
    \end{array}$$

    In particular, $\LL^2(I)$ is a $4$-dimensional complex labeled
    with the generators of $I^2$.
\end{example}  

We now present two preliminary results needed for the proof that when the ideal $I$ is square-free, $\LL^2(I)$ supports a
free resolution of $I^2$.

\begin{proposition}\label{p:basic-indices-new} Let $m_1,\ldots,m_q$ be a
  minimal square-free monomial generating set for an ideal $I$, let $r$ be a
  positive integer, and suppose that for some $i
  \in [q]$ and $1 \leq u_1 \leq \cdots \leq u_r \leq q$,
  $$m_i^r \mid m_{u_1} \cdots
  m_{u_r} \quad \mbox{ or } \quad m_{u_1} \cdots m_{u_r} \mid m_i^r.$$ 
   Then $u_1=\cdots=u_r=i$.
\end{proposition}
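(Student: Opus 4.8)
The plan is to treat the two divisibility hypotheses separately, each by a short argument on the exponent of a single variable. First I would fix a variable $x$ appearing in $m_i$; since $m_i$ is square-free, $x$ occurs in $m_i$ with exponent exactly $1$, so $x$ occurs in $m_i^r$ with exponent exactly $r$. Consider the case $m_i^r \mid m_{u_1}\cdots m_{u_r}$. Comparing the exponent of $x$ on both sides, and using that each $m_{u_k}$ is square-free (so $x$ occurs in $m_{u_k}$ with exponent $0$ or $1$), we get $r \le \#\{k : x \mid m_{u_k}\} \le r$, hence $x$ divides every one of $m_{u_1},\dots,m_{u_r}$. Running this over all variables $x$ dividing $m_i$ shows $m_i \mid m_{u_k}$ for each $k$. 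By minimality of the generating set, $m_i \mid m_{u_k}$ forces $m_{u_k} = m_i$, and since distinct generators are distinct monomials this forces $u_k = i$ for every $k$; in particular $u_1 = \cdots = u_r = i$.

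For the case $m_{u_1}\cdots m_{u_r} \mid m_i^r$, I would argue in the same spirit but in the reverse direction. Suppose, for contradiction, that not all $u_k$ are equal to $i$; pick an index, say $u_1 \ne i$. By minimality, $m_{u_1}$ does not divide $m_i$, so there is a variable $y$ with $y \mid m_{u_1}$ but $y \nmid m_i$. Then the exponent of $y$ in $m_i^r$ is $0$, while the exponent of $y$ in $m_{u_1}\cdots m_{u_r}$ is at least $1$, contradicting the divisibility. Hence every $u_k$ divides $m_i$, and again minimality plus distinctness of generators give $u_k = i$ for all $k$, so $u_1 = \cdots = u_r = i$.

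The only mild subtlety — and the step I would be most careful about — is the passage from ``$m_i \mid m_{u_k}$'' (or ``$m_{u_k} \mid m_i$'') to ``$u_k = i$''. This uses that $m_1,\dots,m_q$ is a \emph{minimal} generating set: no generator properly divides another, so $m_i \mid m_{u_k}$ already implies $m_i = m_{u_k}$ as monomials, and because the $m_j$ are pairwise distinct monomials this pins down the index. No genuinely hard obstacle arises here; the square-freeness is exactly what makes the variable-by-variable exponent count tight, and the rest is bookkeeping. Since the ordering $u_1 \le \cdots \le u_r$ plays no role in the argument, the conclusion $u_1 = \cdots = u_r = i$ follows immediately once each index is shown to equal $i$.
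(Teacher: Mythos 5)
Your proposal is correct and follows essentially the same route as the paper: a variable-by-variable exponent comparison, using square-freeness to make the counts tight, followed by minimality of the generating set to convert the resulting divisibilities among generators into equalities of indices. The only difference is cosmetic — the paper first cancels the factors with $u_j=i$ and argues by contradiction, while you conclude $u_k=i$ directly for each $k$.
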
  

\begin{proof} If for all, or some, of $j \in [r]$ we have $u_j=i$, then
  those copies of $m_i$ can be deleted from each side of the
  division, so one can assume, without loss of generality that $i=1 <
  u_1 \leq \cdots \leq u_r \leq q$.  Suppose that 
  $$m_1=x_1^{a_1}\cdots x_n^{a_n}
  \quad \mbox{ and } \quad  
  m_{u_j}=x_1^{b^j_1}\cdots x_n^{b^j_n},$$
  where
  $a_v,b^j_v \in \{0,1\}$ for $j \in [r]$ and $v \in [n]$. It follows that:

  \begin{itemize}
    \item if $m_i^r \mid m_{u_1} \cdots m_{u_r}$, then for every index
      $v\in [n]$ where $a_v\neq 0$, we have $ra_v=r$ and so
      $b^1_v=\cdots=b^r_v=1$. Therefore, we have $m_1 \mid m_{u_j}$ for
      $j \in[r]$.  This is a contradiction since these monomials are
      minimal generators of $I$.

    \item if $m_{u_1} \cdots m_{u_r} \mid m_i^r$, then for each
      nonzero exponent $b^1_v$ of $m_{u_1}$ we must have $a_v\neq 0$, and
      so $m_{u_1} \mid m_1$, again a contradiction.
  \end{itemize} 
\end{proof}

\begin{proposition}\label{p:irredundant-gens-I2} Let $I$ be an ideal minimally
  generated by square-free monomials $m_1,\ldots,m_q$ with $q \geq
  2$.  Then for every $i \in [q]$ there is a $j \in [q]\setminus
  \{i\}$ such that
  $$m_um_v \nmid m_im_j \mbox{ for any choice of }u,v \in [q]\setminus
  \{i,j\}.$$ In particular, $m_im_j$ is a minimal generator of $I^2$.
\end{proposition}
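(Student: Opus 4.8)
Fix $i\in[q]$. The plan is to choose $j\in[q]\setminus\{i\}$ so that the set $\operatorname{supp}(m_i)\cup\operatorname{supp}(m_j)$ has smallest possible cardinality, where $\operatorname{supp}(m)$ denotes the set of variables dividing a square-free monomial $m$; such a $j$ exists since $q\ge 2$. I would work throughout with supports, using two elementary facts about square-free monomials: $p\mid q$ if and only if $\operatorname{supp}(p)\subseteq\operatorname{supp}(q)$, and $m_am_b\mid m_cm_d$ if and only if both $\operatorname{supp}(m_a)\cap\operatorname{supp}(m_b)\subseteq\operatorname{supp}(m_c)\cap\operatorname{supp}(m_d)$ and $\operatorname{supp}(m_a)\cup\operatorname{supp}(m_b)\subseteq\operatorname{supp}(m_c)\cup\operatorname{supp}(m_d)$ (check this one variable at a time, noting that every exponent occurring in $m_am_b$ or $m_cm_d$ lies in $\{0,1,2\}$). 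I claim this $j$ has the required property.

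To prove the claim, suppose for contradiction that $m_um_v\mid m_im_j$ for some $u,v\in[q]\setminus\{i,j\}$, where possibly $u=v$. Put $A=\operatorname{supp}(m_i)$, $B=\operatorname{supp}(m_j)$, $C=\operatorname{supp}(m_u)$, $D=\operatorname{supp}(m_v)$. The ``union'' half of divisibility gives $C\subseteq A\cup B$, hence $A\cup C\subseteq A\cup B$; since $u\ne i$, the minimality in the choice of $j$ forces $A\cup C=A\cup B$, and likewise $A\cup D=A\cup B$. Removing $A$ from these equalities gives $C\setminus A=D\setminus A=B\setminus A$; call this common set $R$, and note $R\subseteq C$, $R\subseteq D$, and $R\cap A=\emptyset$. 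Now the ``intersection'' half of divisibility gives $R\subseteq C\cap D\subseteq A\cap B\subseteq A$, so $R=\emptyset$; that is, $B\subseteq A$, i.e.\ $m_j\mid m_i$, contradicting that $m_1,\dots,m_q$ is a minimal generating set and $j\ne i$. This proves the claim. (When $q\le 3$ the set $[q]\setminus\{i,j\}$ has at most one element, so the argument merely degenerates and no separate treatment is needed.)

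For the ``in particular'' statement it suffices to show that $m_am_b\mid m_im_j$ forces $m_am_b=m_im_j$, since then $m_im_j$ is a minimal element under divisibility of the generating set $\{m_am_b\}$ of $I^2$, hence a minimal generator. If $a=b$, then $m_a^2\mid m_im_j$ forces $a=i=j$ by \cref{p:basic-indices-new}, contradicting $i\ne j$; so $a\ne b$. If $\{a,b\}\cap\{i,j\}=\emptyset$, the claim just proved applies; if $\{a,b\}=\{i,j\}$ there is nothing to prove; and in the remaining case $|\{a,b\}\cap\{i,j\}|=1$, cancelling the common generator from $m_am_b\mid m_im_j$ yields $m_b\mid m_i$ or $m_b\mid m_j$ for an index $b\notin\{i,j\}$, again contradicting minimality. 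I expect the only genuine difficulty to be the choice of $j$: once one commits to minimizing $|\operatorname{supp}(m_i)\cup\operatorname{supp}(m_j)|$ the rest is routine bookkeeping with sets, but the choice is essential --- for $I=(xy,zw,xz,yw)$ and $i=1$ the index $j=2$ fails, since $m_3m_4=m_1m_2$, and indeed $j=2$ is not a minimizer.
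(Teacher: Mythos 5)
Your proof is correct, and it takes a genuinely different route from the paper's. The paper argues by contradiction: assuming every $j$ fails, it extracts functions $\varphi,\psi$ on $[q]\setminus\{i\}$ witnessing the divisibilities, iterates them, and uses finiteness of $[q]\setminus\{i\}$ to force a cycle $\varphi^k(a)=\varphi^{k-s}(a)$, which after cancellation yields $\prod m_{b_w}\mid m_i^s$ and hence a contradiction via \cref{p:basic-indices-new}. You instead make an extremal choice up front --- take $j$ minimizing $\lvert\operatorname{supp}(m_i)\cup\operatorname{supp}(m_j)\rvert$ --- and verify directly, via the support characterization of divisibility for products of two square-free monomials, that this $j$ works; your closing example $(xy,zw,xz,yw)$ correctly shows the extremal choice is not optional. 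Your argument is shorter, constructive (it names a witness $j$), and avoids the iteration/pigeonhole machinery entirely, at the modest cost of the slightly fiddly ``intersection and union'' divisibility criterion, which you state and justify correctly. You are also more careful than the paper about the ``in particular'' clause: the paper leaves the cases $\lvert\{a,b\}\cap\{i,j\}\rvert\ge 1$ implicit, whereas you dispose of them explicitly (the $a=b$ case via \cref{p:basic-indices-new}, the overlap case by cancelling the common factor). Both proofs use the square-free hypothesis essentially, yours through supports and the paper's through \cref{p:basic-indices-new}.
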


\begin{proof}
  Suppose, by way of contradiction, that there exists $i \in [q]$ such
  that for every $j \in [q]\setminus \{i\}$ there exist $u,v \in
  [q]\setminus \{i,j\}$ such that $m_um_v \mid m_im_j$.
 
 With $i$ as above, there exist functions  $\varphi, \psi\colon [q]\setminus \{i\}\to  [q]\setminus \{i\}$ such that 
  \begin{equation}
  \label{psi-phi-divides}
  m_{\varphi(j)}m_{\psi(j)}\mid m_im_j\quad\text{for all $j \in [q]\setminus \{i\}$.}
 \end{equation}
 For each $k\ge 0$, let $\varphi^k$ denote the composition
 $\varphi\circ \varphi\circ\dots\circ \varphi$ ($k$ times). (When
 $k=0$, $\varphi^0$ is the identity function.) Let
 $a\in [q]\setminus \{i\}$.  For each $w\ge 1$, set
 $b_w=\psi(\varphi^{w-1}(a))$. Apply \eqref{psi-phi-divides} with
 $j=\varphi^{k-1}(a)$ to get:
 \[
  m_{\varphi^k(a)}m_{b_k}\mid m_im_{\varphi^{k-1}(a)}\quad\text{for all $k\ge 1$}\,.
 \]
 From this, it is easy to see that
\[
\left.\left(m_{\varphi^k(a)}\cdot \prod_{w=1}^{k}m_{b_w}\right) \right| \left(m_im_{\varphi^{k-1}(a)}\cdot \prod_{w=1}^{k-1}m_{b_w} \right)\quad\text{for all $k\ge 2$\,.}
 \]

Inductively, we thus obtain
\begin{equation}
\label{s-k}
\left.\left(m_{\varphi^k(a)}\cdot \prod_{w=1}^{k}m_{b_w} \right) \right| \left(m_i^s m_{\varphi^{k-s}(a)}\cdot \prod_{w=1}^{k-s}m_{b_w} \right)\quad \text{for all $k\ge 2$ and $k > s\ge 1$.}
\end{equation}

Assume $\varphi^k(a)= \varphi^{k-s}(a)$ for some $k\ge 2$ and some $s$ with $k > s \ge  1$. After simplifying in \eqref{s-k} we obtain 
$$
\left.\left(\prod_{w=k-s+1}^{k}m_{b_w} \right) \right| m_i^s. 
$$
For $s=1$, this implies $m_{b_k} \mid m_i$, but since $b_k \neq i$, this contradicts the minimality of the generating set. If $s > 1$ this is a contradiction according to \cref{p:basic-indices-new}. Therefore, we have shown that the integers $\varphi(a), \varphi^2(a), \dots$ are distinct. This is a contradiction, since $\varphi^k(a)\in [q]\smallsetminus\{i\}$ for all $k$, and $[q]\smallsetminus\{i\}$ is a finite set. 
\end{proof}

We are now ready to prove the main result of the paper. 
  
\begin{theorem}[{\bf Main Result}]\label{t:L2-supports-new}
  Let $I$ be a square-free monomial ideal. Then $\LL^2(I)$ supports a
  free resolution of $I^2$.
\end{theorem}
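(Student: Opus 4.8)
The strategy is to verify the hypothesis of \cref{t:res-by-quasitree} for the labeled quasi-tree $\LL^2(I)$. Since $\LL^2_q$ is a quasi-tree (\cref{p:L2-quasitree}) and $\LL^2(I)$ is an induced subcomplex of it obtained by deleting vertices, $\LL^2(I)$ is again a quasi-tree: deleting a vertex from a quasi-tree leaves a quasi-tree, because a leaf order for $\LL^2_q$ restricts to a leaf order for the smaller complex (the joint relation is preserved under taking induced subcomplexes). Having established that $\LL^2(I)$ is a quasi-tree labeled by the generators of $I^2$, it remains to show: for every $\m \in \LCM(I^2)$, the induced subcomplex $\LL^2(I)_\m$ — the full subcomplex on those vertices $\ell_{i,j}$ whose label $m_im_j$ divides $\m$ — is empty or connected.

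\textbf{Key steps.}
First I would record the combinatorial shape of $\LL^2(I)$: it has the ``big'' facet $F_0$ consisting of all surviving $\ell_{i,j}$ with $i<j$, and for each $k$ a facet $F_k$ consisting of all surviving $\ell_{i,j}$ with $k \in \{i,j\}$ (including $\ell_{k,k}$, which by \cref{r:L} is never deleted). The crucial structural observation is that every facet $F_k$ contains $\ell_{k,k}$, and — using \cref{p:irredundant-gens-I2} — for every $k$ there is some $j$ with $\ell_{k,j}$ surviving and lying in $F_0$, so each $F_k$ meets $F_0$. Thus $\LL^2(I)$ is connected, and more importantly $F_0$ plays the role of a ``hub.'' Now fix $\m \in \LCM(I^2)$ and let $W = \{\ell_{i,j} \in \LL^2(I) : m_im_j \mid \m\}$. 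If $W = \emptyset$ we are done, so assume not. The main case analysis is whether $W \cap F_0 \neq \emptyset$. If $W$ contains two vertices $\ell_{a,b}$ and $\ell_{c,d}$ with $a<b$ and $c<d$, both in $F_0$, then they are joined by an edge inside $F_0 \cap W$ (since $F_0$ is a simplex, any subset of its surviving vertices spans a face of $\LL^2(I)$). So the vertices of $W$ that lie in $F_0$ form a connected (in fact complete) subgraph. It then suffices to show every vertex $\ell_{k,k} \in W$ (the only vertices possibly outside $F_0$) is connected within $W$ to some vertex of $W \cap F_0$: indeed $\ell_{k,k}$ and any $\ell_{k,j} \in W$ lie together in the facet $F_k$, hence span an edge of $\LL^2(I)$. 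So the remaining point is: if $m_k^2 \mid \m$, find $j \neq k$ with $\ell_{k,j}$ surviving and $m_km_j \mid \m$ — or else show $\ell_{k,k}$ is the only vertex of $W$, which still gives connectedness.

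\textbf{The main obstacle.}
The delicate part is exactly this last point, and it is where the square-free hypothesis and \cref{p:basic-indices-new,p:irredundant-gens-I2} must be used carefully. A vertex $\ell_{k,j}$ of $\LL^2_q$ can fail to survive into $\LL^2(I)$ because of a divisibility $m_km_j \mid m_um_v$; one has to argue that this does not obstruct connectivity — intuitively, if $\ell_{k,j}$ was deleted, the vertex $\ell_{u,v}$ (or a vertex with equal label) that caused the deletion still lies in $W$ whenever $m_km_j \mid \m$, because its label divides $m_km_j \mid \m$, and it can be chosen to share an index with $k$ or to sit in $F_0$. Chasing these replacements while keeping track of which indices are shared — so as to stay inside a common facet — is the technical heart of the proof; one likely needs \cref{p:basic-indices-new} to rule out the pathological case $m_k^2 \mid m_um_v$ with $\{u,v\}\neq\{k,k\}$, and \cref{p:irredundant-gens-I2} to guarantee at least one surviving ``off-diagonal'' neighbor of index $k$. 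I expect that once the replacement bookkeeping is set up, connectivity of $W$ follows by a short path-finding argument routing everything through $W \cap F_0$.
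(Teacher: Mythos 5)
Your framework matches the paper's: reduce to \cref{t:res-by-quasitree}, observe that the only vertices of $\LL^2(I)_\m$ possibly outside the ``hub'' facet $F_0$ are the diagonal vertices $\ell_{k,k}$, and reduce everything to producing, whenever $m_k^2\mid\m$ and $\m\ne m_k^2$, a surviving off-diagonal vertex $\ell_{k,b}$ with $m_km_b\mid\m$. You have correctly isolated the crux. But you have not proved it, and the fallback you sketch does not work. The ``replacement'' vertex $\ell_{u,v}$ whose label caused the deletion of $\ell_{k,j}$ generally satisfies $u,v\notin\{k,j\}$, so it lies in $F_0$ but shares no index with $k$; since $\ell_{k,k}$ belongs only to the facet $F_k$, its only neighbors in $\LL^2(I)$ are vertices of the form $\ell_{k,j}$, and a replacement sitting in $F_0$ without the index $k$ cannot connect to $\ell_{k,k}$ at all. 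So the option ``or to sit in $F_0$'' is vacuous here, and the option ``share an index with $k$'' is exactly the unproved claim. Iterating replacements also does not obviously terminate in a vertex carrying the index $k$.

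The missing idea, which is the actual heart of the paper's proof, is to apply \cref{p:irredundant-gens-I2} not to $I$ but to the subideal generated by $\{m_j : j\in A\}$ where $A=\{j\in[q] : m_j\mid\m\}$ (note $k\in A$ and $|A|\ge 2$ once the case $\m=m_k^2$ is dispatched via \cref{p:basic-indices-new}). This produces $b\in A\smallsetminus\{k\}$ such that $m_um_v\nmid m_km_b$ for all $u,v\in A\smallsetminus\{k,b\}$. Square-freeness of $m_b$ then gives $m_km_b\mid\m$ (from $m_b\mid\m$ and $m_k^2\mid\m$), and survival of $\ell_{k,b}$ follows because any deleting pair $u,v$ would satisfy $m_um_v\mid m_km_b\mid\m$, forcing $u,v\in A$ and contradicting the choice of $b$. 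Applying \cref{p:irredundant-gens-I2} to all of $I$, as your sketch suggests, would guarantee survival of some $\ell_{k,b}$ but not that $m_km_b\mid\m$; restricting to the subideal indexed by $A$ is what makes both conditions hold simultaneously. Without this step your argument has a genuine gap.
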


\begin{proof}
  Suppose $I$ is minimally generated by the square-free monomials $m_1,\ldots,m_q$.

  The simplicial complex $\LL^2(I)$ is an induced subcomplex of the
  quasi-tree $\LL^2_q$ (\cref{p:L2-quasitree}), and is therefore 
  a quasi-forest itself (see~\cite{CEFMMSS,FH}).  Let $V$ denote the set of vertices of $\LL^2(I)$.  In view of
  \cref{t:res-by-quasitree}, to show that $\LL^2(I)$ supports a
  resolution of $I^2$, we need to show that, for every $\m \in \LCM(I^2)$,
  $\LL^2(I)_\m$ is connected, where $\LL^2(I)_\m$ is the
   induced subcomplex of the complex $\LL^2(I)$ on the set 
   $V_\m=\{\ell_{i,j}\in V
   \st m_im_j \mid \m\}$.

   Suppose $\m \in \LCM(I^2)$. If $q=1$, then
     $\LL^2(I)_\m$ is either empty or a point. If $q=2$, then
     $I^2=(m_1^2, m_1m_2,m_2^2)$ and $\LL^2(I)$, as pictured in
     \cref{e:L2-picture}, has two facets connected by the vertex
     $\ell_{1,2}$. If $\m \in \{m_1^2, m_2^2\}$, then $\LL^2(I)_\m$ is
     a point, and hence connected. Otherwise, $m_1m_2 \mid \m$, so the
     vertext $\ell_{1,2}$ will be in $\LL^2(I)_\m$. If either
     $\ell_{1,1}$ or $\ell_{2,2}$ are in $\LL^2(I)_\m$, they will be
     connected to $\ell_{1,2}$. Therefore $\LL^2(I)_\m$ is
     connected.

   Now assuming $q \geq3$, we use the notation introduced in the proof
   of \cref{p:L2-quasitree} for the facets of $\LL^2_q$, namely
   $F_0, \dots, F_q$. The facets of $\LL^2(I)_\m$ are the maximal sets
   among the sets $F_0\cap V_\m, \dots, F_q\cap V_\m$.
   
 If $\m=m_i^2$ for some $i\in [q]$, then \cref{p:basic-indices-new}
 shows that $\LL^2(I)_\m$ is one point, and hence is connected. Assume
 now that $\m\ne m_i^2$ for all $i\in [q]$, and hence
 $F_0\cap V_\m\ne \varnothing$. To show that $\LL^2(I)_{\m}$ is
 connected, it suffices to show that, for each $i\in [q]$ such that
 $F_i\cap V_\m\ne \varnothing$, the intersection between
 $F_i\cap V_\m$ and $F_0\cap V_\m$ is nonempty. Note that any vertex in
 $F_i\cap V_\m$ other than $\ell_{i,i}$ is also in $F_0\cap V_\m$.  We
 thus need to show that if $\ell_{i,i}\in V_\m$ for some $i\in [q]$,
 then there exists $b\in [q]$ with $b\ne i$ such that
 $\ell_{i,b}\in V_\m$.
   
Assume  $\ell_{i,i}\in V_\m$, hence $m_i^2\mid \m$.
 Set
$$
A=\{j\in [q]\colon m_j\mid \m\}\,.
$$
Note that $i\in A$. Since $\m\ne m_i^2$, we see that $|A|\ge 2$. By~\cref{p:irredundant-gens-I2}
    applied to the ideal generated by the monomials $m_j$ with $j\in A$, there exists $b \in A\smallsetminus \{i\}$ such that 
    \begin{equation}
    \label{does-not-divide}
    m_um_v \mbox{ does not divide } m_im_b \mbox{ for all }
    u,v \in A\setminus\{i,b\}\,.
    \end{equation}
    
    Since $b\in A$, we have $m_b\mid \m$. We claim that $m_im_b\mid \m$ as well. Indeed, since $m_b$ is a
    square-free monomial, setting $\m=m_i^2\n$, one has
  \begin{equation}\label{e:mj}
    m_b\mid \m \Rightarrow m_b\mid m_i^2\n \Rightarrow m_b \mid
  m_i\n \Rightarrow m_im_b \mid m_i^2\n \Rightarrow m_im_b \mid
  \m\,.
  \end{equation}
  
 In order to conclude $\ell_{i,b}\in V_\m$, we need to show that  $\ell_{i,b}\in V$, that is, $\ell_{i,b}$ is a vertex of $\LL^2(I)$.  
 If $\ell_{i,b} \notin V$, then we must have $m_um_v \mid m_im_b$ for some $u,v
    \in [q]\smallsetminus \{i,b\}$.  Since $m_im_b\mid \m$, we further have $m_u\mid \m$ and $m_v\mid \m$, hence $u,v\in A$. 
    This contradicts \eqref{does-not-divide} above. 
    \end{proof}
    
    \begin{remark} \label{minimal} Given any $q \geq 2$, there are square free monomial ideals $I$ with $q$ generators such that $\LL^2(I)=\LL^2_q$ and the resolution supported on $\LL^2(I)$ is minimal. The ideal $I =(xabc, yade, zbdf, wcef)$ is such an example when $q=4$, (see \cite{CEFMMSS}). 
    \end{remark}

\section{A bound on the Betti numbers of $I^2$}\label{s:bounds}

We now consider bounds on the Betti numbers of the second power of a
  square-free monomial ideal $I$, as provided by the simplicial complex
  $\LL^2(I)$.  Since $I^2$ has a free resolution supported
  on $\LL^2(I)$, $\beta_d(I^2)$ is bounded above by the number of
  $d$-faces of $\LL^2(I)$, which itself is bounded above by the number
  of $d$-faces of $\LL^2_q$.

  It can be seen from the proof of \cref{c:betti-bound} below that the
  right-hand term of the inequality $(a)$ below is precisely the
  number of $d$-faces of $\LL^2_q$. Note that the bound in $(a)$ depends only on the number of
  generators $q$, and not on $I$ itself.  The right-hand term of
  the inequality $(b)$ below is equal to the number of $d$-dimensional
  faces of $\LL^2(I)$, which provides a more precise bound that is
  dependent on the ideal $I$.

\begin{theorem}
\label{c:betti-bound}
Let $I$ be a square-free monomial ideal minimally generated by $q\ge 2$ monomials. Then for each $d\ge 0$ 
the $d^{th}$ Betti number $\beta_d(I^2)$ satisfies 
\[ \begin{aligned}
(a) \hspace{.3in}  \beta_d(I^2) &\le \binom{{\frac{1}{2}(q^2-q)}}{d+1}+q {{q-1}\choose{d}}.
\end{aligned}\]
Furthermore, setting $s$ to be the minimal number of generators of $I^2$ and $t_i$ to be the number of vertices of the form $\ell_{i,j}$ that were deleted from $\LL_q^2$ when forming $\LL^2(I)$, then
\[ \begin{aligned}
 (b) \hspace{.3in} \beta_d(I^2) &\le {{s-q}\choose{d+1}}+\sum_{i=1}^q {{q-1-t_i}\choose{d}}.
\end{aligned}\]
\end{theorem}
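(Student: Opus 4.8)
The plan is to deduce both inequalities from \cref{t:L2-supports-new} by counting faces. Since $\LL^2(I)$ supports a free resolution of $I^2$, the Betti number $\beta_d(I^2)$ is at most the rank of the $d$-th simplicial chain group of $\LL^2(I)$, i.e. the number of $d$-dimensional faces of $\LL^2(I)$; and because $\LL^2(I)$ is an induced subcomplex of $\LL^2_q$, that number is in turn at most the number of $d$-faces of $\LL^2_q$. Thus $(a)$ will come from counting the $d$-faces of $\LL^2_q$ and $(b)$ from counting those of $\LL^2(I)$, once we note $\frac12(q^2-q)=\binom q2$.

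For the count in $\LL^2_q$ I would use the facets $F_0,F_1,\dots,F_q$ from the proof of \cref{p:L2-quasitree}: $F_0=\{\ell_{i,j}\st i<j\}$ has $\binom q2$ vertices and, for $i\ge 1$, $F_i=\{\ell_{i,j}\st j\in[q]\}$ has $q$ vertices, exactly one of which is the ``diagonal'' vertex $\ell_{i,i}$. The two structural observations are: (1) every face lies in some facet, and a face of some $F_i$ ($i\ge 1$) that avoids $\ell_{i,i}$ uses only vertices $\ell_{i,j}$ with $j\ne i$, all of which lie in $F_0$; hence each face is either contained in $F_0$ or contains a diagonal vertex, and these alternatives are mutually exclusive since $F_0$ has no diagonal vertex; (2) $F_i\cap F_k=\{\ell_{i,k}\}$ for $i\ne k$, so no face contains two diagonal vertices, and therefore a face containing $\ell_{i,i}$ lies in $F_i$ alone. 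Counting accordingly: there are $\binom{\binom q2}{d+1}$ faces of dimension $d$ inside $F_0$, and for each $i$ there are $\binom{q-1}{d}$ faces of dimension $d$ inside $F_i$ through $\ell_{i,i}$ (choose the remaining $d$ vertices among the $q-1$ non-diagonal vertices of $F_i$); summing these gives $(a)$. The degenerate cases $q=1,2$ are checked directly from \cref{e:L2-picture}.

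For $(b)$ I would run the same bookkeeping inside $\LL^2(I)$, whose vertex set $V$ is obtained from the vertices of $\LL^2_q$ by the deletions of \cref{d:L2I} and whose faces are exactly the subsets of the sets $F_0\cap V,\dots,F_q\cap V$. The crucial input is the equality $|F_0\cap V|=s-q$. To get it, I would argue that the surviving vertices of $\LL^2(I)$ are in bijection with the minimal generators of $I^2$: Step~2 of \cref{d:L2I} deletes $\ell_{u,v}$ whenever $m_um_v$ is a proper multiple of some $m_im_j$, and among index pairs yielding the \emph{same} product it keeps exactly one (the tie-breaking rule is well-defined since, for square-free generators, distinct index pairs with equal products have distinct minima, using that $S$ is a domain). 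Hence $|V|=s$. Moreover, by \cref{r:L} no diagonal vertex is ever deleted, so all $\binom{q+1}{2}-s$ deleted vertices lie in $F_0$, whence $|F_0\cap V|=\binom q2-\big(\binom{q+1}2-s\big)=s-q$; also $|F_i\cap V|=q-t_i$ with $\ell_{i,i}\in F_i\cap V$, so $F_i\cap V$ has $q-1-t_i$ vertices besides $\ell_{i,i}$. Repeating the two-case count ($\binom{s-q}{d+1}$ faces inside $F_0\cap V$, and $\binom{q-1-t_i}{d}$ faces inside $F_i\cap V$ through $\ell_{i,i}$) yields $(b)$.

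The main obstacle I anticipate is exactly the identity $|F_0\cap V|=s-q$, i.e. pinning down the vertex set of $\LL^2(I)$: one must show both that every minimal generator of $I^2$ is represented by a unique surviving vertex, which needs a short analysis of the tie-breaking rule of Step~2 of \cref{d:L2I} for equal products, and that no deleted vertex is diagonal, which is \cref{r:L}. Everything else is elementary face counting; the only delicate point there is to confirm that in both $\LL^2_q$ and $\LL^2(I)$ the ``contained in $F_0$'' faces and the ``through some $\ell_{i,i}$'' faces genuinely partition the set of faces, with the latter not overcounted across $F_1,\dots,F_q$.
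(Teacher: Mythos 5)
Your proposal is correct and follows essentially the same route as the paper: bound $\beta_d(I^2)$ by the number of $d$-faces of $\LL^2(I)$ via \cref{t:L2-supports-new}, partition the faces into those avoiding all diagonal vertices $\ell_{i,i}$ (which lie in $F_0$, giving $\binom{s-q}{d+1}$) and those through a unique $\ell_{i,i}$ (giving $\sum_i\binom{q-1-t_i}{d}$), using \cref{r:L} to see that only off-diagonal vertices are deleted so that $|F_0\cap V|=s-q$. The only cosmetic difference is that the paper proves $(b)$ first and obtains $(a)$ as the special case $t_i=0$, $s=\binom{q+1}{2}$, whereas you count $\LL^2_q$ directly; your explicit verification that surviving vertices biject with the minimal generators of $I^2$ is a detail the paper leaves implicit.
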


By \cref{minimal}, the bound in $(a)$ is sharp.

\begin{proof} We begin by proving inequality $(b)$. \cref{t:L2-supports-new} gives that for each $d\ge 0$,   $\beta_d(I^2)$ is bounded above by the
  number of $d$-dimensional faces of $\LL^2(I)$.  We compute this number next. 

The faces of $\LL^2(I)$ are of two types:
\begin{enumerate}
\item Faces that do not contain any vertex of the form $\ell_{i,i}$ for $i\in [q]$. 
\item Faces that contain a vertex $\ell_{i,i}$ for some $i\in [q]$, and, as a consequence, all the other vertices have the form $\ell_{i,j}$ with $j\in [q]\smallsetminus\{i\}$. 
\end{enumerate}

Let $s$ denote the minimal number of generators of $I^2$ and set $t={{q+1}\choose{2}}-s$. Since ${{q+1}\choose{2}}$ is the number of vertices of $\LL^2_q$, the integer $t$ is precisely the number of vertices that are deleted in the construction of $\LL^2(I)$, as described in \cref{d:L2I}. As noted in \cref{r:L}, all the deleted vertices $\ell_{i,j}$ must satisfy $i\ne j$, hence the number of vertices $\ell_{i,j}$ of $\LL^2(I)$ with $i,j\in [q]$ and $i\ne j$ is  ${q\choose 2}-t$, which is equal to $s-q$.  

To construct a $d$-dimensional face of type (1), we need to choose $d+1$ vertices among the vertices $\ell_{i,j}$ of $\LL^2(I)$ with $i,j\in [q]$ and $i\ne j$. As noted above, there are $s-q$ such vertices.  Thus, the number of $d$-dimensional faces of type (1) is ${{s-q}\choose{d+1}}$. 

 Fix $i\in [q]$. To construct a $d$-dimensional face of type (2) that contains $\ell_{i,i}$,  we need to choose $d$ vertices among the vertices $\ell_{i,j}$ of $\LL^2(I)$ that satisfy $j\ne i$.  There are $q-1-t_i$ such vertices, where $t_i$ denotes the number of vertices $\ell_{i,j}$ of $\LL^2_q$ that are deleted in $\LL^2(I)$. Thus the number of $d$-dimensional faces of type (2)  is 
$\sum_{i=1}^q {{q-1-t_i}\choose{d}}$.

Putting the two computations above together, we have that the number of $d$-dimensional faces of $\LL^2(I)$ is equal to 
${{s-q}\choose{d+1}}+\sum_{i=1}^q {{q-1-t_i}\choose{d}}$, yielding the inequality $(b)$. 

Note that inequality $(a)$ follows from $(b)$ by setting  $t_i=0$ for all $i$ and $s= {{q+1}\choose{2}}$. In view of our computation above, the right-hand side of inequality $(a)$ is precisely the number of $d$-dimensional faces of $\LL^2_q$. 
\end{proof}

For comparison, the fact that $\Taylor(I^2)$
supports a free resolution of $I^2$ gives an inequality 
$$
\beta_d(I^2)\le {{\frac{1}{2}(q^2+q)}\choose{d+1}},
$$ where the binomial on the right side denotes the
  number of $d$-faces of a ${\frac{1}{2}(q^2+q)}$-simplex, which is
  the largest possible size for $\Taylor(I^2)$.

To get an idea how much \cref{c:betti-bound} improves on this bound, 
we present the following table, for $q=4$: 
$$
\renewcommand{\arraystretch}{0.5}
\begin{array}{c|c|c|c|c|c|c|c}
d&0&1&2&3&4&5&6\\
\hline
&&&&&&&\\
d\mbox{-faces of largest possible } \Taylor(I^2)&&&&&&&\\
&10&45&120&210&252&210&120\\
{{ 10 \choose{d+1}}}&&&&&&&\\
&&&&&&&\\
\hline
&&&&&&&\\
d\mbox{-faces of } \LL^2_q &&&&&&&\\
&10& 27 &32&19&6&1&0\\
{{6\choose{d+1}}+4{{3}\choose{d}}}&&&&&&\\
\end{array}
$$

\bigskip

  To put this in context, we examine two specific
  ideals with $4$ generators, and use Macaulay2 to  find the Betti numbers of these ideals.

\begin{example} 
 For the ideal $J=(x,y,z,w)$ Macaulay2   gives the following Betti table for $J^2$:
    $$\begin{array}{c|c|c|c|c}
        d&0&1&2&3\\
        \hline
        \beta_{d}(J^2)&10 &20 &15& 4
      \end{array}
    $$
 These Betti numbers should be compared with the bounds in the table above. 
 
 Now let $I=(abe,bc,cdf,ad)$ be the ideal \cref{e:running}.  The Betti numbers of $I^2$ as calculated by Macaulay2 are
    the following.
    $$
    \begin{array}{c|c|c|c|c}
        d&0&1&2&3\\
        \hline
        \beta_{d}(I^2)&9&14&6&0
      \end{array}
   $$
In this case we should compare these Betti numbers with the  bounds given by the Taylor complex with $9$ vertices and the  bounds given by the \cref{c:betti-bound}$(b)$. 
For the given ideal, we saw that $\LL^2(I)$ has 9 vertices, and $m_2m_4$ is an eliminated vertex, hence $s=9$,  $t_2=t_4=1$ and $t_1=t_3=0$ in \cref{c:betti-bound}$(b)$. We have: 
 
$$
\renewcommand{\arraystretch}{0.5}
\begin{array}{c|c|c|c|c|c|c|c}
d&0&1&2&3&4&5&6\\
\hline
&&&&&&&\\
d\mbox{-faces of } \Taylor(I^2)&&&&&&&\\
&9&36&84&126&126&84&36\\
{{ 9 \choose{d+1}}}&&&&&&&\\
&&&&&&&\\
\hline
&&&&&&&\\
d\mbox{-faces of } \LL^2(I) &&&&&&&\\
&9& 20 & 18 &7 &1&0&0\\
{{5\choose{d+1}}+2{{3}\choose{d}}+2{{2}\choose{d}}}&&&&&&\\
\end{array}
$$
 \end{example}
 
\bigskip

\subsubsection*{Acknowledgements} 
The bulk of this work was done during the 2019 Banff workshop ``Women
in Commutative Algebra''. We are grateful to the organizers, the
funding agencies (NSF DMS-1934391), and to the Banff International
Research Station for their hospitality.

Author \c Sega and Spiroff were partially supported by grants from the
Simons Foundation (\#354594, \#584932, respectively), and authors
Cooper and Faridi were supported by the Natural Sciences and
Engineering Research Council of Canada (NSERC).


\bibliography{bibliography}

\end{document}